\documentclass[12pt,reqno]{amsart}
\usepackage{amsmath,amsfonts,amssymb,amsthm,amscd,latexsym, longtable}
\usepackage [latin1]{inputenc}

\usepackage{tikz}
\usetikzlibrary{arrows}
\usepackage{color}
\usepackage{multirow}

\usepackage{hyperref}					

\newtheorem{thm}{Theorem}[section]
\newtheorem{cor}[thm]{Corollary}

\newtheorem{prop}[thm]{Proposition}
\newtheorem{defn} [thm]{Definition}
\newtheorem{rem}[thm]{Remark}
\newtheorem{exam}[thm]{Example}

\newtheorem{prob}[thm]{Problem}

\numberwithin{equation}{section}

\begin{document}

\author[K.~K.~Kudaybergenov]{Karimbergen Kudaybergenov$^{1,2}$}
\address{$^1$Institute for Advances Study in Mathematics, Harbin Institute of Technologies,
Harbin, 150001, China}
\address{$^2$Suzhou Research Institute of Harbin Institute of Technology, Harbin Institute of Technologies, Suzhou, 215104, China}
    \email{\textcolor[rgb]{0.00,0.00,0.84}{kudaybergenovkk@gmail.com}}

\author[M.~M.~Ibragimov]{Mukhtar Ibragimov$^{3}$}

\address{$^3$Romanovskiy Institute of Mathematics,
    Uzbekistan Academy of Sciences,  Tashkent, 100174, Uzbekistan}
\email{\textcolor[rgb]{0.00,0.00,0.84}{m.ibragimov1909@gmail.com}}

\author[A.~D.~Arziev]{Allabay Arziev$^{3,4}$}
\address{$^4$Karakalpak State University,
     Nukus, 230112, Uzbekistan}

\email{\textcolor[rgb]{0.00,0.00,0.84}{allabayarziev@gmail.com}}



\title[On Completions and Dense Subspaces of Strongly Facially Symmetric Spaces]{On Completions and Dense Subspaces of Strongly Facially Symmetric Spaces}

\begin{abstract}
  We study the behavior of strongly facially symmetric spaces under completion and passage to norm-dense subspaces. We introduce a natural face-density condition guaranteeing that the completion of a normed SFS-space remains strongly facially symmetric. We prove that a norm-dense subspace of a neutral strongly facially symmetric space inherits the neutral SFS-structure whenever it is invariant under the ambient generalized Peirce projections. Several examples and counterexamples are presented, including intermediate subspaces of the trace class and the dense subspace $C[0,1]\subset L_1[0,1]$.
\end{abstract}

\keywords{Strongly facially symmetric space, norm-exposed face, face-density, generalized Peirce projections, dense subspace, trace class}

\maketitle

\bigskip

\section{Introduction}
Strongly facially symmetric (SFS) spaces were introduced and investigated by Friedman and Russo in \cite{FR1986,FR1989Affine} as a geometric framework for studying Banach spaces whose duals admit structures related to Jordan triple systems and operator algebras. The theory is based on the geometry of norm-exposed faces of the unit ball, together with the associated symmetries and generalized Peirce decompositions. Classical examples include preduals of $JBW^{\ast}$-triples (see \cite[Theorem~3.1]{FR1989Operator}), in particular $L_1$-spaces, as well as preduals of von Neumann algebras (see \cite[Theorem~2.11]{FR1989Operator}). The foundations of the theory were developed in the series of papers \cite{FR1986,FR1989Affine,FR1989Operator,FR1992,FR1993}, culminating in a detailed analysis of facial symmetries, generalized tripotents, and generalized Peirce projections.

A natural question, which does not seem to have been addressed explicitly in the literature, concerns the stability of the SFS property under completion and under passage to norm-dense (hereafter, for brevity, dense) subspaces. More precisely, if $Z$ is a normed SFS-space, does its completion remain strongly facially symmetric? Conversely, if $Z$ is a Banach SFS-space and $Y\subset Z$ is a norm-dense subspace, must $Y$ inherit the SFS-structure?

The first question is motivated by the observation that the fundamental objects of the theory---norm-exposed faces, facial symmetries, and generalized Peirce projections---are defined in terms of the norm topology and continuous linear functionals. Since surjective isometries and bounded projections extend naturally to completions, one may expect the SFS-structure to persist. We formulate a sufficient condition under which the completion of an SFS-space is again an SFS-space and discuss the role played by norm-exposed faces in this problem.

The second question turns out to be substantially more delicate. A dense subspace need not inherit the facial structure of the ambient Banach space. In particular, norm-exposed faces of the unit ball of the completion may disappear completely when restricted to the dense subspace. Moreover, facial symmetries and generalized Peirce projections arising in the completion need not preserve the dense subspace.

The main purpose of this note is to illustrate these phenomena. We show that the dense subspace
\[
C[0,1]\subset L_1[0,1],
\]
equipped with the $L_1$-norm, provides a natural obstruction. Although $L_1[0,1]$ is a classical neutral strongly facially symmetric space, we prove that $C[0,1]$ is not even weakly facially symmetric. More precisely, we exhibit a norm-exposed face of the unit ball of $C[0,1]$ which fails to be symmetric in the sense of Friedman and Russo. Consequently, the SFS property is not preserved under passage to norm-dense subspaces.

These observations lead naturally to the following general problem: characterize those dense subspaces of Banach SFS-spaces that inherit the facially symmetric structure. We hope that the examples presented here will be useful in understanding the extent to which the theory of facially symmetric spaces depends on completeness.

We introduce a natural face-density condition guaranteeing that the completion of an SFS-space remains strongly facially symmetric. We then prove that invariance under the ambient generalized Peirce projections is sufficient for a dense subspace to inherit the neutral SFS-structure.

The general theory is illustrated by several examples and counterexamples. Among them are the dense subspaces
\[
\mathcal F(H)\subset S_1(H),
\qquad
C[0,1]\subset L_1[0,1],
\]
and certain intermediate subspaces of the trace class. We also discuss the relation between complex and real SFS-spaces.

\section{Preliminaries}

In this section we briefly recall the basic notions of the theory of strongly facially symmetric spaces.

Let $Z$ be a real or complex normed space. Two elements $f,g\in Z$ are said to be \emph{orthogonal}, written
\[
f\diamond g,
\]
if
\[
\|f+g\|=\|f-g\|=\|f\|+\|g\|.
\]

A \emph{norm-exposed face} of the unit ball $Z_1$ is a nonempty set of the form
\[
F_u=\{f\in Z_1:u(f)=1\},
\]
where $u\in Z^{\ast}$ and $\|u\|=1$.

For a subset $S\subset Z$, define
\[
S^\diamond=\{f\in Z:f\diamond g\text{ for every }g\in S\},
\]
and call $S^\diamond$ the \emph{orthogonal complement} of $S$.

Subsets $S,T\subset Z$ are called \emph{orthogonal}, written $S\diamond T$, if $f\diamond g$ for all $(f,g)\in S\times T$.

An element $u\in Z^{\ast}$ is called a \emph{projective unit} if $\|u\|=1$ and
\[
\langle u,F_u^\diamond\rangle=0.
\]
We denote by $\mathcal{F}$ and $\mathcal{U}$ the sets of all norm-exposed faces of $Z_1$ and all projective units in $Z^{\ast}$, respectively.

A norm-exposed face $F$ is called \emph{symmetric} if there exists a surjective linear isometry
\[
S_F:Z\longrightarrow Z
\]
such that
\[
S_F^2=I
\]
and
\[
\operatorname{Fix}(S_F)=\overline{\operatorname{sp}(F)}\oplus F^\diamond.
\]
Such an isometry is called a \emph{symmetry corresponding to $F$}.

Associated with a symmetric face $F$ are the generalized Peirce projections
\[
P_2(F),\qquad P_1(F),\qquad P_0(F),
\]
defined by
\[
P_1(F)=\frac12(I-S_F),
\]
while
\[
P_2(F)+P_0(F)=\frac12(I+S_F),
\]
where $P_2(F)$ and $P_0(F)$ are the projections onto $\operatorname{sp}(F)$ and $F^\diamond$, respectively. These projections satisfy
\[
P_2(F)+P_1(F)+P_0(F)=I.
\]

A normed space $Z$ is called \emph{weakly facially symmetric} (WFS) if every norm-exposed face of $Z_1$ is symmetric.

A contractive projection $Q$ on a normed space $Z$ is called \emph{neutral} if
\[
\|Qx\|=\|x\|
\]
implies $Qx=x$. A WFS-space $Z$ is called neutral if, for every symmetric face $F$, the projection $P_2(F)$ is neutral.

A \emph{geometric tripotent} is a projective unit $u\in Z^{\ast}$ such that $F:=F_u$ is a symmetric face and
\[
S_F^{\ast}u=u
\]
for some symmetry $S_F$ corresponding to $F$. We denote by $\mathcal{SF}$ and $\mathcal{GU}$ the sets of all symmetric faces of $Z_1$ and all geometric tripotents in $Z^{\ast}$, respectively.

A WFS-space $Z$ is called \emph{strongly facially symmetric} (an SFS-space) if, for every symmetric face $F_u\in Z_1$ and every $v\in Z^{\ast}$ satisfying $\|v\|=1$ and $F_u\subset F_v$, one has
\[
S_u^{\ast}v=v,
\]
where $S_u$ is a symmetry corresponding to $F_u$.

\section{Completions of SFS-spaces}

The purpose of this section is to investigate when the completion of a normed strongly facially symmetric space remains strongly facially symmetric.

Let $Z$ be a real or complex normed space and let $\tilde{Z}$ denote its completion. We write
\[
Z_1=\{x\in Z:\|x\|\le 1\},
\qquad
\tilde{Z}_1=\{z\in\tilde{Z}:\|z\|\le 1\}.
\]
For $u\in Z^{\ast}$ with $\|u\|=1$, put
\[
F_u^ Z=\{x\in Z_1:u(x)=1\}.
\]
Since $Z$ is dense in $\tilde{Z}$, every $u\in Z^{\ast}$ has a unique norm-preserving extension $\widetilde u\in\tilde{Z}^\ast$. The corresponding norm-exposed face of $\tilde{Z}$ is
\[
F_{\widetilde u}^{\tilde{Z}}
 =\{z\in\tilde{Z}_1:\widetilde u(z)=1\}.
\]

Let $Z$ be a  strongly facially symmetric space. We say that $Z$ satisfies the \emph{face-density condition} (FD) if, for every $u\in Z^{\ast}$ with $\|u\|=1$ such that $F_{\widetilde u}^{\tilde{Z}}\ne\varnothing$, the face $F_u^Z$ is nonempty and
\[
\overline{F_u^Z}^{\,\tilde{Z}}=F_{\widetilde u}^{\tilde{Z}}.
\]

\begin{thm}\label{3.1}
Let $Z$ be a  strongly facially symmetric space satisfying \emph{(FD)}. Then its completion $\tilde{Z}$ is strongly facially symmetric.
\end{thm}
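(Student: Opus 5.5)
The plan is to transfer the whole facial structure from $Z$ to $\tilde Z$ by continuity, using (FD) to identify every norm-exposed face of $\tilde Z_1$ with the closure of a face of $Z_1$. Restriction gives an isometric isomorphism $Z^\ast\cong\tilde Z^\ast$, $u\mapsto\widetilde u$. So every norm-exposed face of $\tilde Z_1$ has the form $\tilde F:=F^{\tilde Z}_{\widetilde u}$ for some $u\in Z^\ast$ with $\|u\|=1$. By (FD), $F:=F^Z_u$ is a nonempty norm-exposed face of $Z_1$ and $\tilde F=\overline F$. Since $Z$ is WFS, $F$ has a symmetry $S_F$. It is a surjective linear isometry of $Z$, so it extends uniquely to a surjective linear isometry $\tilde S$ of $\tilde Z$, and $\tilde S^2=I$ by continuity. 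Likewise the Peirce projections $P_k(F)$ are bounded (they are built from $S_F$ and contractive projections), so they extend to bounded projections $\tilde P_k$ with $\tilde P_2+\tilde P_1+\tilde P_0=I$, $\tilde P_1=\frac12(I-\tilde S)$ and $\tilde P_2+\tilde P_0=\frac12(I+\tilde S)$.

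The key step, and the main obstacle, is to show that
\[
\operatorname{Fix}(\tilde S)=\overline{\operatorname{sp}(\tilde F)}\oplus\tilde F^\diamond .
\]
First, $\operatorname{Fix}(\tilde S)=\operatorname{ran}(\tilde P_2+\tilde P_0)=\overline{\operatorname{ran}P_2}\oplus\overline{\operatorname{ran}P_0}$, the sum being topological because $\tilde P_2,\tilde P_0$ are bounded complementary projections on it. We have $\overline{\operatorname{ran}P_2}=\overline{\operatorname{sp}F}=\overline{\operatorname{sp}\tilde F}$ because $\tilde F=\overline F$. Next, $\overline{F^\diamond}\subset\tilde F^\diamond$, since orthogonality is a closed condition defined by norm equalities and $\tilde F=\overline F$.

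The delicate inclusion is the converse, $\tilde F^\diamond\subset\operatorname{ran}\tilde P_0$. Here I would use the standard facial characterization in the WFS theory: $g\diamond F$ iff $g\in\operatorname{sp}F_v$ for some face $F_v$ orthogonal to $F$, or, in the form usable here, $F^\diamond$ is the annihilator-type set $\{g : \|g\|\cdot$ a suitable norming by faces$\}$. Concretely, take $g\in\tilde F^\diamond$ with $\|g\|=1$. Pick a norming functional $\widetilde v$ for $g$, so that $g\in F^{\tilde Z}_{\widetilde v}$. By (FD), $g$ is a limit of elements $g_n\in F^Z_v$. One then needs $\|P_2(F)g_n\|+\|P_1(F)g_n\|\to0$. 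I would try to get this from the orthogonality $\|f\pm g\|=2$ for $f\in\tilde F$: approximate by $\|f_n\pm g_n\|\to2$, and use the neutrality and contractivity estimates of the Peirce projections from the Friedman--Russo theory, namely $P_2(F)+P_0(F)$ contractive and $S_F$ isometric, to control the components. This is where I expect real work, and perhaps the need to invoke the known result that in an SFS-space $F^\diamond=\ker P_2(F)\cap\ker P_1(F)$ characterized by $u$-functionals: $g\diamond F_u$ iff $\langle P_2(F_u)^\ast w, g\rangle=0$ and $\langle P_1^\ast w, g\rangle=0$ for norming $w$. That condition passes to limits, which closes the argument.

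Once $\tilde Z$ is WFS with symmetries $\tilde S=\overline{S_F}$, the SFS condition is immediate. Let $\tilde F_{\widetilde u}\subset\tilde F_{\widetilde v}$ with $\|\widetilde v\|=1$. Restricting to $Z$ gives $F_u\subset F_v$, so $S_u^\ast v=v$. Hence $\tilde S^\ast\widetilde v$ and $\widetilde v$ agree on the dense set $Z$, and therefore they are equal. One must also check that any symmetry of $\tilde F$ is of this form, or simply note that the definition requires the property for the symmetry in question, and the one constructed suffices; if uniqueness of symmetries is needed, I would invoke the Friedman--Russo uniqueness of $S_F$ in SFS/neutral settings.
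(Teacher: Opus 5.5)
Your outline follows the same route as the paper: you identify $\tilde F=F^{\tilde Z}_{\widetilde u}$ with $\overline{F^Z_u}$ via (FD), extend $S_F$ and the $P_k(F)$ by continuity, compute $\operatorname{Fix}(\tilde S)$, and then check $\tilde S^\ast\widetilde v=\widetilde v$ on the dense set $Z$. Everything you actually prove is correct: $\operatorname{Fix}(\tilde S)=\overline{\operatorname{sp}\tilde F}\oplus\overline{F^\diamond}$, $\overline{F^\diamond}\subset\tilde F^\diamond$, and the SFS identity once $\tilde S$ is known to be a symmetry for $\tilde F$. That last step is more explicit than in the paper. The gap is the step you flag yourself: $\tilde F^\diamond\subset\operatorname{ran}\tilde P_0$, equivalently that $\tilde F^\diamond\cap Z=F^\diamond$ is dense in $\tilde F^\diamond$. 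Without it, $\tilde S$ has not been shown to be a symmetry for $\tilde F$, and none of your suggestions proves it.
\begin{itemize}
\item Since $\tilde P_1,\tilde P_2$ are bounded, requiring $\|P_2(F)g_n\|+\|P_1(F)g_n\|\to0$ for some $g_n\to g$ is literally the statement $\tilde P_2g=\tilde P_1g=0$. So approximating $g$ by arbitrary $g_n\in F^Z_v$ gains nothing. Also, neutrality is not a hypothesis of the theorem.
\item The functional characterization of $F^\diamond$ is a statement about elements of $Z$. Its linear conditions are closed, but passing to limits of elements of $F^\diamond$ only reproves $\overline{F^\diamond}\subset\operatorname{ran}\tilde P_0$. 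For $g\in\tilde Z\setminus Z$ with $g\diamond\tilde F$ you would need that characterization inside $\tilde Z$, which is exactly what you are trying to establish. The argument is circular.
\end{itemize}

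Your first suggested characterization has the right shape, but it is needed in $\tilde Z$. If $g/\|g\|$ lies in a norm-exposed face $F^{\tilde Z}_{\widetilde w}$ with $F^{\tilde Z}_{\widetilde w}\diamond\tilde F$, then (FD) gives $g_n\in F^Z_w$ converging to $g/\|g\|$. Each such $g_n\in Z$ is orthogonal to $\tilde F\supset F$, so $g_n\in F^\diamond$ and $g\in\overline{F^\diamond}$. In concrete cases such as $L_1$ or the trace class, such a face comes from the support of $g$. Nothing in the abstract hypotheses on $Z$ produces it for an arbitrary $g\in\tilde F^\diamond$.

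The paper's proof does not fill this hole either. Its continuity argument shows only that the two orthogonal complements coincide when both are taken in $\tilde Z$. Equivalently, it shows $F^\diamond=\tilde F^\diamond\cap Z$ and $\overline{F^\diamond}\subset\tilde F^\diamond$. The complement in \eqref{eq:fixSu}, however, is taken in $Z$. Reading off $\operatorname{Fix}(\widetilde S_u)$ from \eqref{eq:fixSu} therefore tacitly assumes the same density of $\tilde F^\diamond\cap Z$ in $\tilde F^\diamond$. To complete the proof you need either a genuine argument for this density or an extra hypothesis, for example adding $\overline{(F^Z_u)^\diamond}=(F^{\tilde Z}_{\widetilde u})^\diamond$ to (FD).
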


\begin{proof}
Let $\widetilde u\in\tilde{Z}^{\ast}$ be a norm-one functional and suppose that $F_{\widetilde u}^{\tilde{Z}}$ is a nonempty norm-exposed face of $\tilde{Z}_1$. By the canonical identification
\[
\tilde{Z}^{\ast}\cong Z^{\ast},
\]
there exists $u\in Z^{\ast}$ such that $\widetilde u$ is the unique extension of $u$. By assumption (FD),
\[
F_{\widetilde u}^{\tilde{Z}}=\overline{F_u^Z}^{\,\tilde{Z}}.
\]
Since $Z$ is an SFS-space, the face $F_u^Z$ is symmetric. Hence there exists a surjective linear isometry
\[
S_u:Z\longrightarrow Z,
\qquad
S_u^2=I_Z,
\]
associated with $F_u^Z$. In particular,
\begin{equation}\label{eq:fixSu}
\operatorname{Fix}(S_u)=\overline{\operatorname{sp}(F_u^Z)}\oplus(F_u^Z)^\diamond.
\end{equation}
Because $S_u$ is an isometry on the normed space $Z$, it is uniformly continuous. Therefore it extends uniquely to a surjective linear isometry
\[
\widetilde S_u:\tilde{Z}\longrightarrow\tilde{Z}
\]
satisfying
\[
\widetilde S_u|_Z=S_u.
\]
Moreover, since $S_u^2=I_Z$, by continuity we also have
\[
\widetilde S_u^2=I_{\tilde{Z}}.
\]
By (FD),
\[
\overline{F_u^Z}^{\,\tilde{Z}}=F_{\widetilde u}^{\tilde{Z}},
\]
and consequently
\[
\overline{\operatorname{sp}(F_u^Z)}^{\,\tilde{Z}}
 =\overline{\operatorname{sp}(F_{\widetilde u}^{\tilde{Z}})}^{\,\tilde{Z}}.
\]

We also show that
\[
(F_u^Z)^\diamond=(F_{\widetilde u}^{\tilde{Z}})^\diamond.
\]
Indeed, since $F_u^Z\subset F_{\widetilde u}^{\tilde{Z}}$, we have
\[
(F_u^Z)^\diamond\supset(F_{\widetilde u}^{\tilde{Z}})^\diamond.
\]
For the reverse inclusion, let $f\in(F_u^Z)^\diamond$ and take an arbitrary $g\in F_{\widetilde u}^{\tilde{Z}}$. By the definition of closure, there exists a sequence $(f_n)\subset F_u^Z$ such that $f_n\to g$ in $\tilde{Z}$. Since $f\diamond f_n$, we have
\[
\|f+f_n\|=\|f-f_n\|=\|f\|+\|f_n\|.
\]
By continuity of the norm,
\[
\|f+f_n\|\to\|f+g\|,
\qquad
\|f-f_n\|\to\|f-g\|,
\qquad
\|f_n\|\to\|g\|.
\]
Therefore
\[
\|f+g\|=\|f-g\|=\|f\|+\|g\|,
\]
so $f\diamond g$. Thus $f\in(F_{\widetilde u}^{\tilde{Z}})^\diamond$, which gives the reverse inclusion and hence the asserted equality.

It follows from \eqref{eq:fixSu} that
\[
\operatorname{Fix}(\widetilde S_u)
 =\overline{\operatorname{sp}(F_{\widetilde u}^{\tilde{Z}})}
  \oplus(F_{\widetilde u}^{\tilde{Z}})^\diamond.
\]
Therefore every nonempty norm-exposed face of $\tilde{Z}_1$ is symmetric. The generalized Peirce projections associated with $u$ on $Z$ are bounded linear operators and hence extend continuously to $\tilde{Z}$. In particular, the Peirce decomposition and the symmetry corresponding to the face pass from $Z$ to $\tilde{Z}$. Consequently, $\tilde{Z}$ is strongly facially symmetric.
\end{proof}

\begin{rem}
The condition (FD) is the essential technical point. Without it, the argument can fail: a functional may attain its norm on the completion $\tilde{Z}$ but fail to attain it on the normed space $Z$. Thus one should not suppress this point in a rigorous proof.
\end{rem}

The next example shows that the face-density condition (FD) is not restricted to classical sequence spaces. It also arises naturally in operator theory. Namely, the space of finite-rank operators is a norm-dense subspace of the Schatten trace class $S_1(H)$, and every norm-exposed face of the unit ball of $S_1(H)$ can be approximated by finite-rank elements belonging to the same face. Consequently, $\mathcal F(H)$ satisfies (FD), providing a nontrivial operator-theoretic illustration of the completion theorem.

\begin{exam}
Let $H$ be an infinite-dimensional Hilbert space and let
\[
Z=\mathcal F(H)
\]
be the space of all finite-rank operators on $H$, equipped with the trace norm
\[
\|x\|_1=\operatorname{Tr}(|x|).
\]
Then the completion of $Z$ is the Schatten trace class
\[
\tilde{Z}=S_1(H).
\]
We show that $Z$ satisfies the face-density condition (FD). Let $F$ be a nonempty norm-exposed face of the unit ball of $S_1(H)$. It is well known that such faces are of the form (see \cite[Lemma~2.3]{FR1989Operator})
\[
F=F_v=\{x\in S_1(H)_1:\operatorname{Tr}(xv^{\ast})=1\},
\]
where $v\in B(H)$ is a partial isometry. More explicitly,
\[
F_v=\{x\in S_1(H):x\ge0\text{ in the corner determined by }v,
\ \|x\|_1=1\},
\]
or equivalently,
\[
F_v=\{vh:h\in S_1(H)^+,\ s(h)\le v^{\ast}v,\ \operatorname{Tr}(h)=1\}.
\]
The corresponding face in $Z=\mathcal F(H)$ is
\[
F_v^Z=F_v\cap\mathcal F(H)
 =\{vh:h\in\mathcal F(H)^+,\ s(h)\le v^{\ast}v,\ \operatorname{Tr}(h)=1\}.
\]
Now take $x\in F_v$. Then $x=vh$ for some $h\in S_1(H)^+$ satisfying
\[
s(h)\le v^{\ast}v,
\qquad
\operatorname{Tr}(h)=1.
\]
Choose finite-rank positive operators $h_m\in\mathcal F(H)^+$ such that
\[
s(h_m)\le v^{\ast}v,
\qquad
\operatorname{Tr}(h_m)=1,
\]
and
\[
h_m\longrightarrow h\quad\text{in }S_1(H).
\]
For instance, take normalized spectral truncations of $h$. Put
\[
x_m=vh_m.
\]
Then $x_m\in\mathcal F(H)$ and $x_m\in F_v^Z$. Moreover,
\[
\|x_m-x\|_1=\|v(h_m-h)\|_1\le\|h_m-h\|_1\longrightarrow0.
\]
Therefore
\[
F_v\subset\overline{F_v^Z}^{\,S_1(H)}.
\]
The reverse inclusion is immediate because $F_v$ is norm closed. Hence
\[
\overline{F_v^Z}^{\,S_1(H)}=F_v.
\]
Thus the face-density condition (FD) holds for
\[
\mathcal F(H)\subset S_1(H).
\]
\end{exam}

\section{Dense subspaces of SFS-spaces}

The    preceding discussion suggests that the inheritance of the SFS-structure by dense subspaces is closely related to the behavior of facial symmetries and generalized Peirce projections. The following theorem shows that invariance under all generalized Peirce projections is sufficient for a dense subspace to inherit the neutral strongly facially symmetric structure.

\begin{thm}\label{4.1}
Let $Z$ be a Banach neutral strongly facially symmetric space, and let $Y\subset Z$ be a norm-dense linear subspace. Assume that, for every norm-exposed face $F$ of $Z_1$, the corresponding generalized Peirce projections
\[
P_0(F),\qquad P_1(F),\qquad P_2(F)
\]
satisfy
\[
P_k(F)Y\subset Y,
\qquad
k=0,1,2.
\]
Then $Y$, equipped with the inherited norm, is a neutral strongly facially symmetric space.
\end{thm}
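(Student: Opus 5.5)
The plan is to reduce everything about $Y$ to the corresponding structure in $Z$, using density and the invariance hypothesis. Since $Y$ is dense in the Banach space $Z$, the completion of $Y$ is $Z$, and restriction gives an isometric identification $Z^{\ast}\cong Y^{\ast}$. For $u\in Z^{\ast}$ with $\|u\|=1$ we write $F_u^Y=F_u\cap Y$. The first step is to show that a nonempty norm-exposed face $F_u^Y$ of $Y_1$ determines a nonempty face $F_u$ of $Z_1$, and that the relevant pieces match:
\[
\operatorname{sp}(F_u^Y)=P_2(F_u)Y,\qquad (F_u^Y)^\diamond=P_0(F_u)Y,
\]
where $(F_u^Y)^\diamond$ denotes the orthogonal complement computed in $Y$. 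Here $P_k(F_u)$ are the Peirce projections of the symmetric face $F_u$ in $Z$.

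The main obstacle is the first identity, and this is where neutrality enters. Take $x\in F_u$ and approximate it by $y_n\in Y$. Then $P_2(F_u)y_n\in Y$ and $P_2(F_u)y_n\to P_2(F_u)x=x$. However, these elements need not lie in the face. To fix this, I would use the standard Friedman--Russo facts for neutral SFS spaces: $P_2(F_u)Z$ is an ordered space whose state space is $F_u$, with $u$ as an order unit functional, and every element of $P_2(F_u)Z$ is a difference of multiples of elements of $F_u$ (a Jordan decomposition in $P_2$, cf.\ \cite{FR1989Affine,FR1992}). From this one gets that $F_u\cap Y$ is dense in $F_u$, so that (FD) holds. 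One way to see it is to normalize the ``positive parts'' of the $P_2(F_u)y_n$; the hard point is showing these positive parts stay in $Y$. If they do not, I would fall back on a weaker route. Suppose $g\in P_2(F_u)Y$ is real in the sense $u(g)=\|g\|$. Then $g/\|g\|\in F_u^Y$, and neutrality of $P_2$ together with the decomposition then yields $\operatorname{sp}(F_u^Y)=P_2(F_u)Y$.

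Granting this, orthogonality in $Y$ coincides with orthogonality in $Z$, since the norm is inherited. By density and the continuity argument used in the proof of Theorem~\ref{3.1}, $(F_u^Y)^\diamond=F_u^\diamond\cap Y=P_0(F_u)Z\cap Y=P_0(F_u)Y$, where the last equality uses invariance of $Y$ under $P_0(F_u)$. Next define
\[
S^Y=S_{F_u}|_Y=\bigl(P_2+P_0-P_1\bigr)(F_u)\big|_Y .
\]
By the invariance hypothesis, $S^Y$ maps $Y$ into $Y$. It is an isometric involution, hence surjective, and its fixed space is
\[
\operatorname{Fix}(S^Y)=(P_2+P_0)Y=P_2(F_u)Y\oplus P_0(F_u)Y .
\]
The closure of $\operatorname{sp}(F_u^Y)$ in $Y$ is $\overline{P_2(F_u)Y}^{\,Y}=P_2(F_u)Z\cap Y=P_2(F_u)Y$, because $P_2(F_u)Z$ is closed and $Y$ is $P_2(F_u)$-invariant. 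So every face of $Y_1$ is symmetric, and $Y$ is WFS.

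For the remaining properties, the Peirce projections of $F_u^Y$ are the restrictions $P_k(F_u)|_Y$. Neutrality of $P_2(F_u^Y)$ is then inherited directly from neutrality of $P_2(F_u)$ on $Z$. For the SFS property, take $v\in Y^{\ast}=Z^{\ast}$ with $\|v\|=1$ and $F_u^Y\subset F_v^Y$. Passing to closures, which is possible by the density established in the first step, gives $F_u\subset F_v$. Since $Z$ is SFS, $S_{F_u}^{\ast}v=v$, and restricting to $Y$ yields $(S^Y)^{\ast}v=v$.
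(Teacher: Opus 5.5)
There is a genuine gap, and it sits exactly at the step you call the main obstacle. Everything after ``Granting this'' depends on the claim that $F_u\cap Y$ is dense in $F_u$, or equivalently that $\operatorname{sp}(F_u^Y)=P_2(F_u)Y$ and $(F_u^Y)^\diamond=P_0(F_u)Y$. That includes the computation of $(F_u^Y)^\diamond$ and of $\overline{\operatorname{sp}(F_u^Y)}^{\,Y}$, the identification of the Peirce projections of $F_u^Y$ with $P_k(F_u)|_Y$, and the step $F_u^Y\subset F_v^Y\Rightarrow F_u\subset F_v$ in the SFS verification.

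You do not prove this claim. You concede that the positive parts of $P_2(F_u)y_n$ may leave $Y$. The fallback does not help either: it needs every element of $P_2(F_u)Y$ to be a combination of elements $g\in Y$ with $u(g)=\|g\|$, whereas the Jordan decomposition you invoke lives in $Z$.

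In fact the claim is false under the hypotheses of the theorem. In complex $L_1[0,1]$ (pairing $u(f)=\int fu$) let $\theta(t)=e^{2\pi i t}$ and $Y=\{\theta g: g\ \text{simple}\}$.
\begin{itemize}
\item $Y$ is dense. Every generalized Peirce projection of $L_1$ is multiplication by some $\chi_A$ (with $P_1=0$), so $Y$ is invariant.
\item Put $u=\bar\theta\chi_{[0,1/2]}+\chi_{(1/2,1]}$. Since $|u|=1$ a.e., we get $F_u=\{f:\|f\|_1=1,\ fu\ge 0\}$, $P_2(F_u)=I$ and $P_0(F_u)=0$.
\item A simple $g$ takes finitely many values and $\theta$ is injective on $(1/2,1]$. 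Hence $\theta g\ge 0$ on $(1/2,1]$ forces $g=0$ there.
\item So $F_u^Y$ is nonempty (it contains $2\theta\chi_{[0,1/2]}$), but all its elements vanish on $(1/2,1]$.
\end{itemize}
Each of your intermediate claims then fails:
\begin{itemize}
\item $2\chi_{(1/2,1]}\in F_u$ lies at distance $2$ from $F_u^Y$, so $F_u^Y$ is not dense in $F_u$.
\item $\operatorname{sp}(F_u^Y)\ne Y=P_2(F_u)Y$.
\item $(F_u^Y)^\diamond=\{\theta g: g=0 \text{ on } [0,1/2]\}\ne 0=P_0(F_u)Y$.
\item The Peirce-$2$ projection of $F_u^Y$ in $Y$ is multiplication by $\chi_{[0,1/2]}$, not $P_2(F_u)|_Y=I$.
\item $v=\bar\theta\chi_{[0,1/2]}-\chi_{(1/2,1]}$ satisfies $F_u^Y\subset F_v^Y$ but $F_u\not\subset F_v$.
\end{itemize}

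In this example $Y$ is still a neutral SFS space: multiplication by $\bar\theta$ maps it isometrically onto the simple functions, where every symmetry is the identity. So what breaks is your route, not necessarily the statement. $F_u^Y$ is really the face of the smaller functional $u'=u\chi_{[0,1/2]}$, since $F_u^Y=F_{u'}\cap Y$ and $\overline{F_u^Y}=F_{u'}$. The symmetry to restrict is therefore $S_{u'}$, not $S_u$.

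A correct argument has to handle the possibility that $\overline{F_u^Y}$ is a proper subset of $F_u$. One option is to show that $\overline{F_u^Y}$ is itself a norm-exposed face $F_{u'}$ of $Z_1$ with $F_u^Y=F_{u'}\cap Y$, and then run your restriction and closure arguments with $u'$ in place of $u$.

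The paper's proof restricts $S_{\widetilde u}$ just as you do. It then obtains $\operatorname{Fix}(S_u)=\overline{\operatorname{sp}(F_u^Y)}\oplus(F_u^Y)^\diamond$ in one line ``by invariance'', without identifying the two summands separately and without checking $S_u^{\ast}v=v$. You have correctly isolated what that line needs, but your proposal does not supply it, and in the form you state it the needed claim is false.
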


\begin{proof}
Let $u\in Y^{\ast}$ with $\|u\|=1$, and suppose that
\[
F_u^Y=\{y\in Y_1:u(y)=1\}
\]
is nonempty. Since $Y$ is dense in $Z$, $u$ extends uniquely to a norm-one functional $\widetilde u\in Z^{\ast}$. Then
\[
F_u^Y=F_{\widetilde u}^Z\cap Y.
\]
Since $Z$ is an SFS-space, the face $F_{\widetilde u}^Z$ is symmetric. Let
\[
S_{\widetilde u}
 =P_2(\widetilde u)-P_1(\widetilde u)+P_0(\widetilde u)
\]
be its facial symmetry. By the assumed invariance of $Y$ under the generalized Peirce projections,
\[
S_{\widetilde u}(Y)\subset Y.
\]
Since $S_{\widetilde u}^2=I$, it follows that
\[
S_{\widetilde u}(Y)=Y.
\]
Hence
\[
S_u:=S_{\widetilde u}|_Y
\]
is a surjective linear isometry of $Y$ and $S_u^2=I_Y$. Moreover,
\[
\operatorname{Fix}(S_u)=Y\cap\operatorname{Fix}(S_{\widetilde u}).
\]
Since
\[
\operatorname{Fix}(S_{\widetilde u})
 =\overline{\operatorname{sp}(F_{\widetilde u}^Z)}\oplus(F_{\widetilde u}^Z)^\diamond,
\]
and $Y$ is invariant under the generalized Peirce projections, we obtain
\[
\operatorname{Fix}(S_u)=\overline{\operatorname{sp}(F_u^Y)}\oplus(F_u^Y)^\diamond.
\]
Thus $F_u^Y$ is symmetric.

Finally, neutrality also passes to $Y$. Indeed, if
\[
\|P_2(\widetilde u)y\|=\|y\|,
\qquad y\in Y,
\]
then neutrality in $Z$ gives
\[
P_2(\widetilde u)y=y.
\]
Therefore the  restricted projection $P_2(\widetilde u)|_Y$ is neutral. Hence $Y$ is a neutral SFS-space.
\end{proof}

\begin{rem}
Theorem~\ref{4.1} may be viewed as the main structural result of the paper. It identifies invariance under the ambient generalized Peirce projections as a sufficient condition for the inheritance of the neutral SFS-structure by dense subspaces.
\end{rem}

\begin{rem}
The theorem provides a general mechanism for constructing new neutral strongly facially symmetric spaces from known ones. In particular, it applies to dense subspaces of trace-class spaces and other preduals of $JBW^{\ast}$-triples whenever the corresponding generalized Peirce projections leave the subspace invariant.
\end{rem}

The preceding examples suggest that the inheritance of the SFS-property by subspaces is closely related to the behavior of facial symmetries and generalized Peirce projections. In the case of the trace class $S_1(H)$, these operators admit explicit descriptions and play a fundamental role in the facial structure of the unit ball. It is therefore natural to ask which intermediate subspaces
\[
\mathcal F(H)\subset Z\subset S_1(H)
\]
inherit the neutral strongly facially symmetric structure from $S_1(H)$.

The next result shows that invariance under all generalized Peirce projections is sufficient. In particular, every such invariant intermediate subspace is itself a neutral strongly facially symmetric space.

\begin{cor}\label{4.4}
Let $Z$ be a symmetric operator ideal satisfying
\[
\mathcal F(H)\subset Z\subset S_1(H).
\]
Then $Z$, equipped with the trace norm inherited from $S_1(H)$, is a neutral strongly facially symmetric space.
\end{cor}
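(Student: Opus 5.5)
The plan is to derive Corollary~\ref{4.4} from Theorem~\ref{4.1}, taking the ambient space to be $S_1(H)$ and $Y=Z$. The trace class is the predual of the von Neumann algebra $B(H)$. By \cite[Theorem~2.11]{FR1989Operator} it is a Banach strongly facially symmetric space. It is also neutral, since $P_2(F_v)$ for a partial isometry $v$ is a contractive Peirce projection of a $JBW^{\ast}$-triple predual, and these are known to be neutral. Density of $Z$ in $S_1(H)$ is immediate: $\mathcal F(H)\subset Z$, and $\mathcal F(H)$ is trace-norm dense in $S_1(H)$. So the only hypothesis left to check is the invariance $P_k(F)Z\subset Z$ for $k=0,1,2$ and every norm-exposed face $F$.

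For the invariance I would use the explicit Peirce projections in $S_1(H)$. By \cite[Lemma~2.3]{FR1989Operator}, every nonempty norm-exposed face has the form $F=F_v$ with $v$ a partial isometry. Put $p=vv^{\ast}$ and $q=v^{\ast}v$. Up to the left/right convention in the pairing $\operatorname{Tr}(xv^{\ast})$, the projections are
\[
P_2(F_v)x=pxq,\qquad P_0(F_v)x=(1-p)x(1-q),\qquad P_1(F_v)x=px(1-q)+(1-p)xq.
\]
Each of these is a finite sum of maps $x\mapsto axb$ with $a,b\in B(H)$. A symmetric operator ideal is in particular a two-sided ideal of $B(H)$, so $axb\in Z$ whenever $x\in Z$. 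This gives $P_k(F_v)Z\subset Z$ for $k=0,1,2$.

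The step needing the most care is matching the abstract objects in Theorem~\ref{4.1} with these concrete formulas. Theorem~\ref{4.1} asks for invariance under the projections attached to every norm-exposed face of $S_1(H)_1$. I will identify the abstract $P_k(F)$ with the concrete Peirce projections of the triple $B(H)$ relative to the tripotent $v$, transported to the predual, using the Friedman--Russo identification of facial and algebraic Peirce projections in \cite{FR1989Operator}. Once this is in place the ideal property settles everything. Theorem~\ref{4.1} then gives that $Z$, with the trace norm, is a neutral SFS-space. The faces of $Z$ that Theorem~\ref{4.1} handles, namely those exposed by $u\in Z^{\ast}$, are exactly the restrictions $F_{\widetilde u}\cap Z$, so no further faces need to be treated.
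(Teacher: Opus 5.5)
Your proposal is correct and takes the same route as the paper. Density of $Z$ in $S_1(H)$ comes from $\mathcal F(H)\subset Z$; invariance $P_k Z\subset Z$ holds because the Peirce projections are finite sums of corner maps $x\mapsto axb$ and $Z$ is a two-sided ideal; the dense-subspace theorem then applies. Note that the paper's proof ends by citing Theorem~\ref{3.1}, but the result actually needed is Theorem~\ref{4.1}, which is the one you correctly invoke.
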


\begin{proof}
Since $Z$ is a symmetric operator ideal,
\[
axb\in Z
\]
for every $x\in Z$ and all bounded operators $a,b\in B(H)$.
Let $u$ be a geometric tripotent of $S_1(H)$, and let
\[
l=l(u),
\qquad
r=r(u)
\]
be the corresponding support projections. The associated generalized Peirce projections are finite sums of maps of the form
\[
x\longmapsto lxr,
\qquad
x\longmapsto(1-l)x(1-r),
\]
and analogous corner operators (see \cite[Theorem~2.11]{FR1989Operator}). Since $Z$ is an operator ideal, all such operators leave $Z$ invariant. Therefore
\[
P_k(u)Z\subset Z,
\qquad
k=0,1,2.
\]
Since $\mathcal F(H)$ is dense in $S_1(H)$, the intermediate space $Z$ is norm dense in $S_1(H)$. Hence Theorem~\ref{3.1} applies and yields that $Z$ is a neutral strongly facially symmetric space.
\end{proof}

\begin{cor}\label{4.5}
Let $Z$ be a solid sequence space such that
\[
c_{00}\subset Z\subset\ell_1.
\]
Then $Z$, equipped with the $\ell_1$-norm, is a neutral strongly facially symmetric space.
\end{cor}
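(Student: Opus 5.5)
The plan is to deduce the statement from Theorem~\ref{4.1}, applied with ambient space $\ell_1$ and dense subspace $Y=Z$, in the same way Corollary~\ref{4.4} handles the trace class. The space $\ell_1=L_1(\mathbb N,\text{counting measure})$ is the predual of the commutative von Neumann algebra $\ell_\infty$, so by \cite[Theorem~2.11]{FR1989Operator} it is a Banach neutral strongly facially symmetric space. Since $c_{00}\subset Z\subset\ell_1$ and $c_{00}$ is norm dense in $\ell_1$, the space $Z$ is norm dense in $\ell_1$. So only the invariance hypothesis of Theorem~\ref{4.1} needs checking.

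First I would describe the norm-exposed faces of the unit ball of $\ell_1$ and their Peirce projections explicitly. Take $u\in\ell_\infty$ with $\|u\|_\infty=1$, let $A=\{n:|u_n|=1\}$, and let $v=u\chi_A$. If $A\ne\varnothing$, then
\[
F_u=\{x\in\ell_1:\|x\|_1=1,\ \operatorname{supp}x\subset A,\ x_n\overline{u_n}\ge 0\}.
\]
If $A=\varnothing$, then $F_u$ is empty, because equality in $\bigl|\sum u_nx_n\bigr|\le\sum|x_n|$ forces $|u_n|=1$ on $\operatorname{supp}x$. Thus every nonempty face is $F_v$ for the tripotent $v=u\chi_A$. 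For such a face I would use the commutative case of \cite[Theorem~2.11]{FR1989Operator}, which gives
\[
P_2(F)x=\chi_A x,\qquad P_0(F)x=\chi_{\mathbb N\setminus A}x,\qquad P_1(F)=0,
\]
with symmetry $S_F=I$. I would verify this directly as well. The closed span of $F_v$ is $\{x:\operatorname{supp}x\subset A\}$ in the complex case, and in the real case as well. Its orthogonal complement is $\{x:\operatorname{supp}x\subset\mathbb N\setminus A\}$, because in $\ell_1$ two elements satisfy $f\diamond g$ exactly when they have disjoint supports. These two subspaces sum to $\ell_1$, so $S_F=I$ and $P_1(F)=0$.

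With this description, invariance follows from solidity. If $x\in Z$ and $B\subset\mathbb N$, then $|\chi_Bx|\le|x|$ coordinatewise, so $\chi_Bx\in Z$. Hence $P_k(F)Z\subset Z$ for $k=0,1,2$, and Theorem~\ref{4.1} gives that $Z$, with the $\ell_1$-norm, is a neutral strongly facially symmetric space.

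The main obstacle is the complete description of the faces, in particular the claim that $\operatorname{Fix}(S_F)$ exhausts all of $\ell_1$, so that $P_1(F)=0$. If I prefer not to rely on this, it is enough to note that each $P_k(F)$ is a multiplication operator by a $\{0,1\}$-valued, or at worst bounded, sequence. Solidity then gives invariance whichever Peirce projections appear, so the argument does not depend on the exact form of $P_1(F)$.
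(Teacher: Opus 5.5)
Your proposal is correct and follows the paper's proof essentially step for step: $\ell_1$ is a neutral SFS-space as the predual of $\ell_\infty$, its generalized Peirce projections are multiplication by $\chi_A$ and $\chi_{\mathbb N\setminus A}$ with $P_1=0$, solidity gives invariance, $c_{00}\subset Z$ gives density, and Theorem~\ref{4.1} finishes the argument. You are right to invoke Theorem~\ref{4.1} (the paper's proof cites Theorem~\ref{3.1} at this point, evidently a slip), and your direct check that $\overline{\operatorname{sp}(F)}\oplus F^\diamond=\ell_1$ forces $S_F=I$ only supplies detail that the paper takes for granted.
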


\begin{proof}
It is well known that $\ell_1$ is a neutral strongly facially symmetric space as the predual of $\ell_\infty$ (\cite[Theorem~2.11]{FR1989Operator}). Moreover, the generalized Peirce projections in $\ell_1$ are coordinate projections. More precisely, if $u=(u_n)\in\ell_\infty$ is a geometric tripotent and
\[
A=\{n\in\mathbb N:|u_n|=1\},
\]
then
\[
P_2(u)x=\chi_Ax,
\qquad
P_0(u)x=\chi_{\mathbb N\setminus A}x,
\qquad
P_1(u)=0.
\]
Since $Z$ is solid, for every $x\in Z$ and every $A\subset\mathbb N$ we have
\[
|\chi_Ax|\le|x|,
\]
and hence $\chi_Ax\in Z$. Thus $Z$ is invariant under all coordinate projections and therefore under all generalized Peirce projections of $\ell_1$.
Since $c_{00}\subset Z\subset\ell_1$, the subspace $Z$ is norm dense in $\ell_1$. Therefore, by Theorem~\ref{3.1}, $Z$ is a neutral strongly facially symmetric space.
\end{proof}

\begin{exam}
Let $H$ be an infinite-dimensional Hilbert space and set
\[
Z=S_1(H)\cap S_2(H),
\]
where $S_1(H)$ is the trace class and $S_2(H)$ is the Hilbert--Schmidt class. We equip $Z$ with the trace norm inherited from $S_1(H)$. Then
\[
\mathcal F(H)\subset Z\subset S_1(H).
\]
These inclusions are proper when $H$ is infinite-dimensional. Indeed,
\[
\mathcal F(H)\subsetneq S_1(H)\cap S_2(H)
\quad\text{and}\quad
S_1(H)\cap S_2(H)\subsetneq S_1(H).
\]
The space $Z$ is invariant under the generalized Peirce projections of $S_1(H)$. Indeed, these projections are given by left and right multiplication by projections and their finite sums, and such maps are contractive both on $S_1(H)$ and on $S_2(H)$. Therefore
\[
P_k(u)Z\subset Z,
\qquad
k=0,1,2.
\]
Consequently, by Corollary~\ref{4.4}, $Z$ is a neutral strongly facially symmetric space.
\end{exam}

The preceding theorem shows that invariance under generalized Peirce projections is a sufficient condition. The next example demonstrates that this condition is not automatic. We construct a dense intermediate subspace of $S_1(H)$ which is not invariant under the ambient generalized Peirce projections. Proposition~\ref{4.7} establishes only the failure of invariance. The failure of weak facial symmetry is treated separately in Proposition~\ref{4.9}.

\begin{prop}\label{4.7}
Let $H$ be an infinite-dimensional separable Hilbert space. Then there exists a complex linear subspace
\[
\mathcal F(H)\subset Z\subset S_1(H)
\]
which is norm dense in $S_1(H)$ but is not invariant under the  generalized Peirce projections of $S_1(H)$.
\end{prop}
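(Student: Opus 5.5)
The plan is to take $Z=\mathcal F(H)+\mathbb C x_0$ for a single, suitably chosen non-finite-rank trace-class operator $x_0$. Since $\mathcal F(H)$ is already norm dense in $S_1(H)$, any such $Z$ is automatically norm dense. So the whole task is to find a generalized Peirce projection $P_k(v)$ of $S_1(H)$ with $P_k(v)x_0\notin Z$.

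Fix an orthonormal basis $(e_n)_{n\ge1}$ of $H$ and put
\[
x_0=\sum_{n\ge1}2^{-n}\,e_n\otimes e_n ,
\]
which is the diagonal positive operator with eigenvalues $2^{-n}$. It has infinite rank and lies in $S_1(H)$. As in the proof of Corollary~\ref{4.4}, we use the description of the Peirce projections from \cite[Theorem~2.11]{FR1989Operator}: for a projection $p$, viewed as a partial isometry with $l(p)=r(p)=p$, we have
\[
P_2(p)x=pxp,\qquad P_0(p)x=(1-p)x(1-p),\qquad P_1(p)x=px(1-p)+(1-p)xp .
\]
Choose $p$ to be the projection onto $\overline{\operatorname{sp}}\{e_{2n}:n\ge1\}$. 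Then
\[
P_2(p)x_0=\sum_{n}2^{-2n}e_{2n}\otimes e_{2n}.
\]

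The key step is to show $P_2(p)x_0\notin Z$. Suppose $P_2(p)x_0=f+\lambda x_0$ with $f\in\mathcal F(H)$ and $\lambda\in\mathbb C$. Then $P_2(p)x_0-\lambda x_0$ has finite rank. This operator is diagonal with entries $(1-\lambda)2^{-2n}$ at the even indices and $-\lambda 2^{-(2n-1)}$ at the odd indices. A diagonal operator has finite rank only if all but finitely many of its entries vanish, so we would need both $\lambda=1$ and $\lambda=0$, which is impossible. Hence $P_2(p)Z\not\subset Z$.

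Two smaller checks remain. First, $Z$ must be a complex linear subspace, which is immediate. Second, $Z$ must be a proper intermediate space. The inclusion $\mathcal F(H)\subsetneq Z$ holds because $x_0$ has infinite rank, and $Z\subsetneq S_1(H)$ holds because $P_2(p)x_0\in S_1(H)\setminus Z$.

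The only real obstacle is making sure the map used is genuinely a generalized Peirce projection of $S_1(H)$. This means checking that $p$ corresponds to a norm-exposed face $F_p=\{x\ge0,\ s(x)\le p,\ \operatorname{Tr}x=1\}$ and that $x\mapsto pxp$ is its $P_2$. Both facts are covered by the cited description of faces and Peirce projections in the predual of $B(H)$. If one prefers to avoid $P_2$, the same rank argument works with $P_0(p)$.
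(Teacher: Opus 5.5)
Your proposal is correct and follows essentially the same route as the paper. The paper also takes $Z=\mathcal F(H)+\mathbb C h$ with $h=php+qhq$ having infinite-rank blocks and shows $P_2(p)h=php\notin Z$ by a rank argument; the only difference is that you use an explicit diagonal $x_0$ and read off its diagonal entries, whereas the paper compresses by $q=1-p$ to force $\lambda=0$.
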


\begin{proof}
Let $p\in B(H)$ be a projection such that both $pH$ and $(1-p)H$ are infinite-dimensional. Put
\[
q=1-p.
\]
Choose a positive trace-class operator $h\in S_1(H)$ satisfying
\[
h=php+qhq,
\]
where both $php$ and $qhq$ have infinite rank. Define
\[
Z=\mathcal F(H)+\mathbb C h.
\]
Then
\[
\mathcal F(H)\subset Z\subset S_1(H),
\]
and $Z$ is norm dense in $S_1(H)$, since $\mathcal F(H)$ is dense in $S_1(H)$.
Consider the projection $p$ as a geometric tripotent of $S_1(H)$. The corresponding Peirce-2 projection is
\[
P_2(p)x=pxp,
\qquad
x\in S_1(H).
\]
Since $h\in Z$, invariance of $Z$ under $P_2(p)$ would imply
\[
php=P_2(p)h\in Z.
\]
Suppose that $php\in Z$. Then there exist $f\in\mathcal F(H)$ and $\lambda\in\mathbb C$ such that
\[
php=f+\lambda h.
\]
Multiplying by $q$ on the left and right, we obtain
\[
0=qfq+\lambda qhq.
\]
Since $qfq$ has finite rank whereas $qhq$ has infinite rank, it follows that $\lambda=0$. Hence
\[
php=f\in\mathcal F(H),
\]
which contradicts the assumption that $php$ has infinite rank. Therefore
\[
php\notin Z.
\]
Consequently,
\[
P_2(p)Z\not\subset Z.
\]
Thus $Z$ is not invariant under the generalized Peirce projection $P_2(p)$ and hence is not invariant under the generalized Peirce projections of $S_1(H)$.
\end{proof}

\begin{rem}
Proposition~\ref{4.7} shows that norm-dense intermediate subspaces of a neutral strongly facially symmetric space need not be invariant under the ambient generalized Peirce projections. Thus the hypothesis of Theorem~\ref{4.1} is a genuine additional assumption.
\end{rem}

Whether every norm-dense neutral strongly facially symmetric subspace of a neutral SFS-space must be invariant under the ambient generalized Peirce projections remains an open problem.

The next example shows that lack of invariance under ambient generalized Peirce projections may indeed lead to failure of the SFS-property. More precisely, we exhibit a norm-dense subspace of $L_1[0,1]$ which is not weakly facially symmetric. The obstruction comes from the fact that the natural generalized Peirce projections of $L_1[0,1]$ do not preserve the subspace.

\begin{prop}\label{4.9}
Let $Z=C[0,1]$ be equipped with the $L_1$-norm
\[
\|f\|_1=\int_0^1|f(t)|\,dt.
\]
Then $Z$ is not weakly facially symmetric. More precisely, the norm-exposed face
\[
F=\left\{f\in Z_1:\int_0^{1/2}f(t)\,dt=1\right\}
\]
is not symmetric in the sense of Friedman--Russo.
\end{prop}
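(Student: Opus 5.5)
The plan is to show that no symmetry $S_F$ can exist, by proving that the set $\overline{\operatorname{sp}(F)}\oplus F^\diamond$ (closure taken in $Z$) is \emph{not} norm closed in $Z$. The fixed-point set of a bounded operator, $\operatorname{Fix}(S_F)=\ker(I-S_F)$, is always norm closed, so this would rule out every candidate $S_F$. Throughout, $u(f)=\int_0^{1/2}f\,dt$ is the exposing functional.

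First I will identify $F$ and check that it is a genuine nonempty norm-exposed face. Clearly $\|u\|\le 1$. Norming bumps supported in $(0,1/2)$ show that $\|u\|=1$ and that $F\neq\varnothing$. If $f\in F$, then
\[
1=\int_0^{1/2}f\le\int_0^{1/2}|f|\le\|f\|_1\le 1 .
\]
Hence $f\ge 0$ on $[0,1/2]$ and $\int_{1/2}^1|f|=0$. By continuity, $f\equiv 0$ on $[1/2,1]$, and in particular $f(1/2)=0$. So $F$ consists exactly of the nonnegative continuous functions vanishing on $[1/2,1]$ with integral $1$.

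Next I compute the two summands.

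(a) Since every $f\in F$ vanishes on $[1/2,1]$, we have $\operatorname{sp}(F)\subset A:=\{f\in Z: f|_{[1/2,1]}=0\}$.

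(b) $A$ is $L_1$-closed in $Z$. Indeed, an $L_1$-limit of such functions vanishes a.e. on $[1/2,1]$, hence everywhere there by continuity.

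(c) Conversely, $\operatorname{sp}(F)=A$. Any $f\in A$ can be written as $f=f^+-f^-$, and each of $f^\pm$ is a nonnegative multiple of an element of $F$ (or zero). So $\overline{\operatorname{sp}(F)}=A$.

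(d) For $F^\diamond$, I will use the standard fact that for $g,h\in L_1$, $g\diamond h$ holds iff $gh=0$ a.e. This follows because $|g+h|,|g-h|\le|g|+|h|$ pointwise, and equality of the integrals forces pointwise equality a.e. For continuous $g$, orthogonality to all bumps supported in small intervals of $(0,1/2)$ forces $g=0$ on $(0,1/2)$, hence on $[0,1/2]$. The converse inclusion is immediate. Thus $F^\diamond=B:=\{g\in Z: g|_{[0,1/2]}=0\}$.

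(e) Every $f\in Z$ with $f(1/2)=0$ splits as $f\chi_{[0,1/2]}+f\chi_{[1/2,1]}$, and both pieces are continuous. Together with $A+B\subset\{f(1/2)=0\}$, this gives
\[
A\oplus B=\{f\in C[0,1]: f(1/2)=0\}.
\]

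Finally I will derive the contradiction. Suppose $S_F$ is a symmetry for $F$. Then $\operatorname{Fix}(S_F)=\ker(I-S_F)$ is $\|\cdot\|_1$-closed in $Z$, so $\{f: f(1/2)=0\}$ would be $L_1$-closed in $C[0,1]$. However, this set is a proper subspace, since it misses the constant function $1$. It is also $L_1$-dense: take $1-\varphi_n$ with $\varphi_n$ a tent of height $1$ and width $1/n$ at $1/2$; then $1-\varphi_n\to 1$ in $L_1$, and adding such corrections approximates any $f$. A dense proper subspace is not closed, a contradiction. Hence $F$ is not symmetric and $Z$ is not WFS.

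The step needing most care is (d), the identification of $F^\diamond$, where one must pass from the integral equalities to the pointwise a.e. condition and then use continuity. The remainder is elementary.
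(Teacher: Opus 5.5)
Your proposal is correct and follows essentially the same route as the paper: you identify $\overline{\operatorname{sp}F}\oplus F^\diamond=\{h\in C[0,1]:h(1/2)=0\}$ and note that this proper subspace is $L_1$-dense, so it cannot be the fixed-point set of a bounded operator. The only difference is minor: you use directly that $\ker(I-S_F)$ is closed in $Z$, whereas the paper extends $S$ to $L_1[0,1]$ and concludes $\widetilde S=I$. (In the complex case, split $f$ into real and imaginary parts before taking $f^\pm$ in step (c).)
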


\begin{proof}
Put
\[
u=\chi_{[0,1/2]}\in L_\infty[0,1]=Z^{\ast}.
\]
Then $\|u\|=1$, and
\[
F=F_u=\{f\in Z_1:u(f)=1\}
\]
is a norm-exposed face.

The face $F$ is nonempty. Indeed, take any nonnegative continuous function $f$ with
\[
\operatorname{supp} f\subset(0,1/2)
\qquad\text{and}\qquad
\int_0^{1/2}f(t)\,dt=1.
\]
Then $f\in F$.

Now let $f\in F$. Since
\[
1=\int_0^{1/2}f(t)\,dt
 \le\int_0^{1/2}|f(t)|\,dt
 \le\int_0^1|f(t)|\,dt
 \le1,
\]
all inequalities are equalities. Hence
\[
f(t)\ge0\quad\text{a.e. on }[0,1/2]
\]
and
\[
f(t)=0\quad\text{a.e. on }(1/2,1].
\]
Since $f$ is continuous, this implies
\[
f(t)=0,
\qquad
t\in[1/2,1].
\]
Thus
\[
\overline{\operatorname{sp} F}=\{f\in C[0,1]:f(t)=0\text{ for }t\in[1/2,1]\}.
\]
On the other hand, orthogonality in the $L_1$-norm is disjointness of supports. Therefore
\[
F^\diamond=\{g\in C[0,1]:g(t)=0\text{ for }t\in[0,1/2]\}.
\]
Consequently,
\[
\overline{\operatorname{sp} F}\oplus F^\diamond
 =\{h\in C[0,1]:h(1/2)=0\}.
\]
Assume, for contradiction, that $F$ is symmetric. Then there exists a surjective linear isometry $S:Z\to Z$ such that
\[
S^2=I
\]
and
\[
\operatorname{Fix}(S)=\overline{\operatorname{sp} F}\oplus F^\diamond
 =\{h\in C[0,1]:h(1/2)=0\}.
\]
Since $S$ is an isometry for the $L_1$-norm, it extends uniquely to a surjective linear isometry
\[
\widetilde S:L_1[0,1]\longrightarrow L_1[0,1].
\]
But the subspace
\[
\{h\in C[0,1]:h(1/2)=0\}
\]
is dense in $L_1[0,1]$, because changing a function at one point is invisible in $L_1$. Hence $\widetilde S$ fixes a dense subspace of $L_1[0,1]$. By continuity,
\[
\widetilde S=I.
\]
Therefore $S=I$ on $C[0,1]$, and hence
\[
\operatorname{Fix}(S)=C[0,1],
\]
which contradicts
\[
\operatorname{Fix}(S)=\{h\in C[0,1]:h(1/2)=0\}.
\]
Thus $F$ is not symmetric.
\end{proof}

\begin{rem}
The failure of weak facial symmetry is caused by the fact that the natural generalized Peirce projections in $L_1[0,1]$ associated with the face $F$ do not preserve the subspace $C[0,1]$.
\end{rem}

\section{Final remarks and open problems}

The results of this paper suggest that invariance under generalized Peirce projections plays a fundamental role in the inheritance of the SFS-structure.

\begin{prob}
Characterize those norm-dense subspaces of a neutral strongly facially symmetric space which inherit the neutral SFS-structure. In particular, is invariance under the ambient generalized Peirce projections also necessary?
\end{prob}

To formulate the next problem, we recall several notions from the theory of facially symmetric spaces introduced by Friedman and Russo (see \cite{FR1992} for details).

\begin{defn}
A normed space $Z$ is called \emph{atomic} if every symmetric face of the unit ball $Z_1$ contains an extreme point.
\end{defn}

We denote by $\operatorname{ext} Z_1$ the set of all extreme points of $Z_1$. A point $x\in Z_1$ is called a \emph{norm-exposed point} of $Z_1$ if
\[
\{x\}=Z_1\cap H
\]
for some hyperplane $H$. We denote by $\operatorname{exp} Z_1$ the set of all norm-exposed points of $Z_1$.

\begin{defn}
Let $Z$ be a neutral SFS-space. We say that $Z$ satisfies the zxiom:
\begin{itemize}
\item[--]  \textup{(PE)} if
\[
\operatorname{exp} Z_1=\operatorname{ext} Z_1;
\]
\item[--]   \textup{(STP)} if, for every pair $f,g\in\operatorname{ext} Z_1$,
\[
\overline{\langle f\mid g\rangle}=\langle g\mid f\rangle,
\]
where the bar denotes complex conjugation;
\item[--]   \textup{(ERP)} if, for every geometric tripotent $u$ and every $f\in\operatorname{ext} Z_1$, the element $P_2(u)f$ is a scalar multiple of some element of $\operatorname{ext} Z_1$;
\item[--]   \textup{(JP)} if, for every pair of mutually orthogonal geometric tripotents $u$ and $v$,
\[
S_uS_v=S_{u+v};
\]
\item[--] \textup{(FE)} if every norm-closed face of the unit ball $Z_1$ is norm exposed.
\end{itemize}
\end{defn}

\begin{prob}
Let $Z$ be a strongly facially symmetric space satisfying the face-density condition \textup{(FD)}. If $Z$ is neutral (respectively, atomic), is its completion $\tilde{Z}$ neutral (respectively, atomic)? If $Z$ satisfies one of the axioms \textup{(PE)}, \textup{(STP)}, \textup{(ERP)}, \textup{(JP)}, or \textup{(FE)}, does $\tilde{Z}$ inherit the same property?
\end{prob}

\end{document}